\documentclass[10pt,reqno]{amsart}
\usepackage{amsmath,amssymb,amsthm}
\usepackage{microtype} 
\usepackage{cite}
\usepackage{enumitem, xcolor}
\setlist[enumerate,1]{label=(\alph*), itemsep=5pt+2pt, leftmargin=*}
\usepackage{hyperref}
\usepackage{mathtools}
\mathtoolsset{showonlyrefs}

\allowdisplaybreaks

\newtheorem{theorem}{Theorem}[section]

\newtheorem{lemma}[theorem]{Lemma}
\newtheorem{corollary}[theorem]{Corollary}

\theoremstyle{definition}

\newtheorem{remark}[theorem]{Remark}
\newtheorem{example}[theorem]{Example}

\newcommand{\C}{\mathbb{C}}
\newcommand{\D}{\mathbb{D}}

\newcommand{\De}{\mathbb{D}_{\mathrm{e}}}
\newcommand{\K}{\mathcal{K}}
\renewcommand{\P}{\mathbb{CP}}
\newcommand{\R}{\mathbb{R}}
\newcommand{\T}{\mathbb{T}}

\newcommand{\caplog}{\operatorname{cap}}

\newcommand{\Span}{\operatorname{span}}
\newcommand{\ord}{\operatorname{ord}}
\newcommand{\capc}{\operatorname{cap}}

\renewcommand{\Im}{\operatorname{Im}}

\renewcommand{\phi}{\varphi}

\newcommand{\PP}{\P}
\newcommand{\Cp}{\C_+}
\newcommand{\Cm}{\C_-}
\newcommand{\N}{\mathcal{N}}

\title[Fermat's Last Theorem in star-invariant subspaces {II}]{Fermat's Last Theorem in star-invariant subspaces {II}:  small boundary spectrum, projective plane curves, and the cubic Fermat equation}
\author{Stephan Ramon Garcia}
\address{Department of Mathematics and Statistics, Pomona College, 610 N. College Ave., Claremont, CA 91711, USA}
\email{stephan.garcia@pomona.edu}
\urladdr{\url{https://stephangarcia.sites.pomona.edu/}}

\thanks{Partially supported by NSF grant DMS-2452084.}

\subjclass[2020]{30H10, 30H15, 47A15, 30F10, 30C85.}
\keywords{Fermat's Last Theorem, Fermat equation, model space, star-invariant subspace, backward shift, Hardy space, inner function, boundary spectrum, pseudocontinuation, Nevanlinna class, logarithmic capacity, projective plane curve, elliptic curve, spherical derivative, singular inner function, Blaschke product.}

\begin{document}

\begin{abstract}
We study algebraic relations among functions in star-invariant subspaces.  Let $\theta$ be an inner function and suppose that $f,g,h\in K_{\theta}^p$ satisfy $P(f,g,h)=0$, in which $P$ is a homogeneous polynomial and the associated projective plane curve is smooth.  Then $\dim\Span\{f,g,h\}\leq1$ if either $\deg P\geq4$ and the boundary spectrum of $\theta$ has logarithmic capacity zero, or $\deg P=3$ and the boundary spectrum contains at most one point.  This settles new cases of a problem of Dyakonov, subsumes the previously known results, and recovers the classical fact that polynomial parametrizations of a smooth projective plane curve of degree at least three are projectively constant.
\end{abstract}

\maketitle

\section{Introduction}

For $1 \leq p \leq \infty$, let $H^p$ denote the Hardy space on the open unit disk $\D$ \cite{Duren, Koosis, Nikolski, Garnett}.
Every function in $H^p$ has nontangential boundary values almost everywhere on the unit
circle $\T$, and its boundary function belongs to $L^p(\T)$. We identify Hardy-space functions with their boundary 
values without further comment.  

For each inner function $\theta$ on $\D$, we have a \emph{star-invariant subspace} $K_{\theta}^p = H^p \cap \theta \overline{zH^p}$.
These subspaces are invariant for the backward shift on $H^p$ \cite{CimaRoss, DSS, CBSTK}.  For $p=2$, we obtain the \emph{model space} $K_{\theta}^2 = H^2 \ominus \theta H^2$ from the study of Hilbert-space contractions \cite{IMSTO, MS, MSAS, Nikolski}.
The \emph{spectrum} $\sigma(\theta)$ of $\theta$  is the closure of the union of the zero set of its Blaschke factor
and the support of the singular measure from its singular-inner factor \cite[Def.~7.3.2]{IMSTO}.
The set of points of $\T$ across which $\theta$ cannot be analytically continued is the \emph{boundary spectrum} 
$\sigma(\theta)\cap \T$ \cite[Thm.~7.3.4.i \& Prop.~7.3.8]{IMSTO}.  In what follows, 
$\#(\sigma(\theta)\cap \T)$ and $\caplog(\sigma(\theta)\cap \T)$ denote 
the cardinality and logarithmic capacity of the boundary spectrum, respectively.

In 2010, Dyakonov posed the question of proving ``Fermat's Last Theorem'' for star-invariant subspaces \cite[Prob.~2]{Dyakonov}.
A triple $(f,g,h)$ of functions in $K_{\theta}^p$ is \emph{trivial} if $\dim\Span\{f,g,h\}\leq1$ and it is \emph{nontrivial} otherwise.
Dyakonov asked whether $f^n+g^n=h^n$ with $f,g,h \in K_{\theta}^p$ and $n \in \{3,4,5,\ldots\}$
implies that $(f,g,h)$ is trivial.  If $\theta$ is a finite Blaschke product, Dyakonov noted that the desired result follows from
the Mason--Stothers theorem \cite{Mason,Stothers}, a polynomial analogue of the $abc$-conjecture.

The author proved Dyakonov's conjecture when $n \geq 4$ and the boundary spectrum $\sigma(\theta) \cap \T$ has logarithmic capacity zero \cite[Thm.~1.1]{GarciaFLT}.  In this note we extend and generalize this result.  First, we consider certain equations 
$P(x,y,z)=0$ in which $P(x,y,z) \in \C[x,y,z]$ is homogeneous; Fermat's Last Theorem concerns
$P(x,y,z) = x^n+y^n-z^n$.  Second, we treat the previously untouched cubic case $n=3$ when
$\sigma(\theta)\cap \T$ contains at most one point.  For example, this applies when $\theta$ is the product of an atomic singular
inner function and a Blaschke product whose zeros accumulate only at the atom.
Our main result is the following theorem.

\begin{theorem}\label{Theorem:Main}
Let $1\leq p\leq\infty$, let $\theta$ be an inner function on $\D$, and
let $P(x,y,z) \in \C[x,y,z]$ be homogeneous of degree $n \geq 3$.  Suppose that the projective plane curve
\begin{equation}\label{eq:Gamma}
\Gamma_P = \big\{ [x:y:z]\in\PP^2 : P(x,y,z)=0 \big\}
\end{equation}
is smooth and $f,g,h\in K_{\theta}^p$ satisfy $P(f,g,h)=0$.  Then $\dim\Span\{f,g,h\}\leq 1$
if either of the following holds:
\begin{enumerate}[leftmargin=*]
\item $n=3$ and $\#(\sigma(\theta)\cap \T) \leq 1$; or
\item $n \geq 4$ and $\caplog(\sigma(\theta)\cap \T) = 0$.
\end{enumerate}
\end{theorem}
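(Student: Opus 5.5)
The plan is to reduce the statement to a classical fact about holomorphic maps into curves. Functions in $K_\theta^p$ admit pseudocontinuations across $\T$: on the boundary, $f=\theta\overline{z\tilde f}$ with $\tilde f\in H^p$. This formula extends $f$ to a meromorphic function of bounded type on $\De=\{|z|>1\}\cup\{\infty\}$. Off the boundary spectrum, $\theta$ continues analytically across $\T$, and so do $f,g,h$. The continuation agrees with the pseudocontinuation $\theta(z)\overline{\tilde f(1/\bar z)}/z$ computed via reflection, since $\theta(1/\bar z)=1/\overline{\theta(z)}$.

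Step 1 is to assemble these pieces. Set $E=\sigma(\theta)\cap\T$. Then $f,g,h$ extend to meromorphic functions on $\widehat{\C}\setminus E$, and the identity $P(f,g,h)=0$ persists by the identity theorem. Suppose $(f,g,h)$ is nontrivial. Since $\Gamma_P$ is a smooth plane curve of degree $n\geq3$, it contains no line, so $[f:g:h]$ is nonconstant. Thus we obtain a nonconstant holomorphic map $F=[f:g:h]\colon\widehat{\C}\setminus E\to\Gamma_P$; clearing common zeros and poles locally makes the projective map holomorphic.

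Step 2 treats case (b). Here $\Gamma_P$ is a compact Riemann surface of genus $(n-1)(n-2)/2\geq3\geq2$, so its universal cover is $\D$. The domain $\widehat{\C}\setminus E$ has $\caplog(E)=0$, and in particular $E$ is polar. I would use that a closed polar set is removable for holomorphic maps into a compact hyperbolic Riemann surface. One route is to lift locally: near each point of $E$, $F$ composed with a local inverse chart lands in a bounded domain, and the lifting obstruction around small loops is handled by noting that $\widehat\C\setminus E$ has parabolic type. A cleaner route uses the Nevanlinna bounded-type information, as in \cite{GarciaFLT}: $f,g,h$ are of bounded type on both sides, so the spherical-derivative/area (Ahlfors–Shimizu characteristic) of $F$ near $E$ is controlled, and a capacity-zero set is then removable. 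Either way, $F$ extends to a holomorphic map $\widehat\C\to\Gamma_P$. This extension is constant, since by Riemann–Hurwitz there is no nonconstant map from genus $0$ onto genus $\geq1$, a contradiction.

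Step 3 treats case (a). Now $\Gamma_P$ is an elliptic curve, so its universal cover is $\C$, and capacity zero no longer suffices, because a punctured sphere minus many points can map onto a torus. With $\#E\leq1$ the domain is $\C$ (after sending the point of $E$ to $\infty$) or $\widehat\C$. Since $\widehat\C\setminus E$ is simply connected, $F$ lifts to an entire function $\tilde F\colon\C\to\C$ with $F=\pi\circ\tilde F$. To rule this out I must show the singularity at $E$ is removable, i.e.\ that $F$ does not have an essential singularity there. The main obstacle is to use the bounded-type (Nevanlinna class) structure: $f,g,h$ are quotients of bounded analytic functions inside and outside $\D$. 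Composing $F$ with the Weierstrass $\wp$-type coordinates, for example the meromorphic function $x/z$ on $\Gamma_P$, gives $f/h$. This is meromorphic on $\C$, and its restrictions to $\D$ and $\De$ are of bounded type. A nonconstant elliptic function of a transcendental entire function has characteristic growing too fast for that. I would try first to show that $\tilde F$ must be a polynomial, using a growth comparison of $T(r,\wp\circ\tilde F)$ with the bounded-type bound near the boundary point, approached along radii from both sides. Then I would conclude that $\wp\circ\tilde F$ has an essential singularity unless $\tilde F$ is constant, giving the contradiction and hence triviality.
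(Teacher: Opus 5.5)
Your part (b) is the paper's argument: glue $[f:g:h]$ on $\D$ to $[\widetilde f^{\#}:\widetilde g^{\#}:\widetilde h^{\#}]$ on $\De$, remove $E$ by the capacity-zero removability theorem for maps into compact surfaces of genus $\geq2$, then apply Riemann--Hurwitz or Liouville. The paper cites that theorem from J\"arvi and you may simply cite it.

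The gap is in part (a). Step~3 names the obstacle, but both ingredients you propose fail.

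\emph{The growth comparison.} Bounded type in $\Cp$ and $\Cm$ places no restriction on the Nevanlinna characteristic $T(r,\cdot)$ of a meromorphic function on $\C$, so the comparison has nothing to compare against. For any discrete real sequence $(x_k)$, the function $\phi(z)=\sum_{k\geq1}k^{-2}(z-x_k)^{-1}$ has $\Im\phi<0$ on $\Cp$ and $\Im\phi>0$ on $\Cm$, hence is of bounded type on both sides. Yet $x_k=\sqrt k$ gives $n(r,\infty)\sim r^2$, the same growth as $\wp$ itself, and $x_k=\log(k+1)$ gives infinite order. Bounded type only controls quantities carrying the weight $|\Im z|$, which degenerates along $\R$ (along $\T$ in the disk picture). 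So the assertion that an elliptic function of a transcendental entire function ``grows too fast for that'' is unsupported.

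\emph{The endgame.} An essential singularity at $\zeta$ contradicts nothing. Take $\theta(z)=\exp(-\frac{\zeta+z}{\zeta-z})$. Then $1-\overline{\theta(0)}\theta\in K_\theta^p$ becomes $1-e^{-1}e^{iz}$ after the M\"obius map taking $\Cp$ to $\D$ and $\infty$ to $\zeta$. This function is of bounded type in both half-planes and has an essential singularity at $\infty$. Thus nothing you wrote rules out even the simplest lift $\tilde F(w)=w$.

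What is missing is a quantitative estimate for the projective map itself, with the right weight. The paper's argument runs as follows.
\begin{itemize}
\item Set $u=\tfrac12\log(|f|^2+|g|^2+|h|^2)$; its Riesz measure dominates a constant multiple of $(\Phi^{\sharp})^2\,dA$.
\item The Nevanlinna hypothesis gives $u$ a harmonic majorant on each half-plane, so its Green potential is finite.
\item Since $G_{+}(z,a)\gtrsim\Im z/|z|^2$ for large $|z|$, this yields $\int_{|z|>R}|\Im z|\,|z|^{-2}(\Phi^{\sharp})^2\,dA<\infty$.
\item Writing $\Phi=\pi\circ k$ through the universal cover $\pi:\C\to\Gamma_P$, periodicity gives $(\Phi^{\sharp})^2\geq c|k'|^2$.
\item Parseval with $|\sin t|\geq\sin^2 t$ shows $\int_R^\infty\int_0^{2\pi}|k'(re^{it})|^2|\sin t|\,dt\,dr=\infty$ unless $k'\equiv0$.
\end{itemize}
This handles polynomial and transcendental lifts at once. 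It also avoids coordinate ratios such as $f/h$, whose poles are exactly what an unweighted characteristic cannot absorb.
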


We remind the reader that two-dimensional complex projective space is $\P^2 = (\C^3 \setminus \{\mathbf{0}\}) / \sim$, in which $\sim$ identifies nonzero scalar multiples. We denote the equivalence class of $(x,y,z)$ by $[x : y : z]$.  With its standard affine charts $\P^2$ is a compact complex manifold of complex dimension $2$ \cite[Ch.~I, Sec.~3]{Miranda}.
The projective plane curve $\Gamma_P$ is \emph{smooth} if and only if $P$ and its partial derivatives
have no common zero in $\PP^2$.  
No irreducibility hypothesis is needed in Theorem~\ref{Theorem:Main} since a smooth projective plane curve is automatically irreducible \cite[p.~15]{Miranda}.

Theorem~\ref{Theorem:Main} shows that ``Fermat's Last Theorem for star-invariant subspaces'' 
holds under (a) or (b). More generally, it provides a unified treatment of homogeneous polynomial 
equations whose associated projective plane curves are smooth.

\begin{example}\label{Example:GenusOne}
If $\#(\sigma(\theta)\cap \T) \leq 1$ and $1\leq p \leq \infty$, then there are no nontrivial 
triples $(f,g,h)$ in $\K_{\theta}^p$ such that $P(f,g,h) = 0$, where $P(x,y,z)$ is any of the following:
\begin{align*}
P(x,y,z) &= \text{$x^3+y^3- z^3$} && (\text{Fermat cubic}),\\
P(x,y,z) &= \text{$ax^3+by^3+cz^3$ with $abc \neq 0$} && (\text{Diagonal cubic}),\\
P(x,y,z) &= \text{$x^3+y^3+z^3-3\lambda xyz$ with $\lambda^3 \neq 1$} && (\text{Hesse normal form}),\\
P(x,y,z) &= \text{$y^2z-x^3-Axz^2-Bz^3$ with $4A^3+27B^2\neq0$} && (\text{Weierstrass form}),\\
P(x,y,z) &= \text{$y^2z-x(x-z)(x-\lambda z)$ with $\lambda \neq 0,1$} && (\text{Legendre form}).
\end{align*}
Indeed, each of these projective plane curves is smooth, so Theorem~\ref{Theorem:Main}.a applies.
\end{example}

\begin{example}\label{Example:GenusHigher}
If $\capc(\sigma(\theta)\cap\T) = 0$ and $1\leq p \leq \infty$, then there are no nontrivial triples
$f,g,h$ in $\K_{\theta}^p$ such that $P(f,g,h) = 0$, where $P(x,y,z)$ is any of the following: 
\begin{align*}
P(x,y,z) &= \text{$x^n+y^n- z^n$ with $n\geq 4$} && (\text{Fermat curve}),\\
P(x,y,z) &= \text{$ax^n+by^n+cz^n$ with $abc \neq 0$ and $n\geq 4$} && (\text{Diagonal curve}),\\
P(x,y,z) &= \text{$xy^{n-1}+yz^{n-1}+zx^{n-1}$ with $n \geq 4$} && (\text{Klein curve}),\\
P(x,y,z) &= \text{$y^{n-1}z-x^n+z^n$ with $n \geq 4$} && (\text{Catalan curve}),\\
P(x,y,z) &= \text{\small$x^4+y^4+z^4-\lambda(x^2y^2+y^2z^2+x^2z^2)$ with $\lambda\notin\{ -2,1,2\}$}&& (\text{Ciani quartic}),\\
P(x,y,z) &= 27x^6+9xz^5+9xy^5-135x^4yz-45x^2y^2z^2+10y^3z^3 && (\text{Wiman sextic}),\\
P(x,y,z) &=144(x^4+y^4)+350x^2y^2-225(x^2+y^2)z^2+81z^4 && (\text{Trott curve}).
\end{align*}
Indeed, each of these projective plane curves is smooth, so Theorem~\ref{Theorem:Main}.b applies.
\end{example}

Theorem~\ref{Theorem:Main} also recovers a classical result from algebraic geometry
which follows from the Riemann--Hurwitz formula \cite[Thm.~II.4.16]{Miranda}.

\begin{corollary}\label{Corollary:Polynomial}
Let $P(x,y,z)$ be a homogeneous polynomial of degree $n\geq3$ such that $\Gamma_P\subset\PP^2$ is smooth.  If
$f,g,h$ are polynomials such that $P(f,g,h) = 0$,  then $\dim \Span\{f,g,h\}\leq 1$.
\end{corollary}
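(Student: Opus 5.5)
The plan is to deduce Corollary~\ref{Corollary:Polynomial} from Theorem~\ref{Theorem:Main} by realizing any finite collection of polynomials as elements of a single star-invariant subspace whose inner function has empty boundary spectrum. Let $f,g,h$ be polynomials with $P(f,g,h)=0$, and let $N$ exceed the maximum of their degrees. Take $\theta(z)=z^N$, a finite Blaschke product whose only zero is at the origin. Then $\sigma(\theta)=\{0\}$, so $\sigma(\theta)\cap\T=\varnothing$. This set has cardinality $0\leq 1$ and logarithmic capacity zero, so both hypotheses (a) and (b) of Theorem~\ref{Theorem:Main} hold, whichever $n\geq 3$ is in play.

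The key check is that each polynomial of degree less than $N$ lies in $K_{\theta}^p$ for every $1\leq p\leq\infty$. Certainly such a polynomial $q$ lies in $H^\infty\subset H^p$. To see that $q\in\theta\overline{zH^p}$, write $q(z)=\sum_{k=0}^{N-1}a_kz^k$. On $\T$ we have $\bar z=1/z$, so
\begin{equation}
\overline{z^N}\,q=\sum_{k=0}^{N-1}a_k\bar z^{\,N-k}=\overline{z\,\textstyle\sum_{k=0}^{N-1}\bar a_k z^{N-1-k}}.
\end{equation}
Hence $q=z^N\overline{zr}$ with $r$ a polynomial, so $r\in H^p$. Thus $f,g,h\in K_{z^N}^p$. (For $p=2$ this is the familiar fact $K_{z^N}^2=\Span\{1,z,\dots,z^{N-1}\}$.)

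With this in hand, Theorem~\ref{Theorem:Main} gives $\dim\Span\{f,g,h\}\leq 1$ directly. The case of identically zero polynomials needs no special treatment, since the theorem allows any triple in $K_\theta^p$. There is no real obstacle here; the only point requiring care is the membership computation above, together with the observation that a finite Blaschke product has empty boundary spectrum, which is immediate from the definition of $\sigma(\theta)$ because $\theta$ has no singular factor and its zeros do not accumulate on $\T$.
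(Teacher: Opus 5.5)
Your proposal is correct and matches the paper's proof. The paper likewise places $f,g,h$ in $K_{z^N}^2=\Span\{1,z,\ldots,z^{N-1}\}$, notes that $\sigma(z^N)\cap\T=\varnothing$, and applies Theorem~\ref{Theorem:Main}; your explicit check that polynomials of degree less than $N$ lie in $K_{z^N}^p$ for every $p$ just spells out the same step in more detail.
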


\begin{proof}
Suppose that $f,g,h \in K_{z^N}^2 = \Span\{1,z,\ldots,z^{N-1}\}$ and $P(f,g,h) = 0$.
Since $\sigma(z^N)\cap\T=\varnothing$, Theorem~\ref{Theorem:Main} ensures that
$\dim\Span\{f,g,h\} \leq 1$. 
\end{proof}

\begin{remark}\label{Remark:Weierstrass}
The distinction between $n=3$ and $n\geq4$ in Theorem~\ref{Theorem:Main} is natural.
A smooth projective plane curve $\Gamma_P$ of degree $n$ has genus
\begin{equation}\label{eq:Genus}
g(\Gamma_P)=\frac{(n-1)(n-2)}2;
\end{equation}
see \cite[Ch.~III, Prop.~2.6]{Miranda}.  Its universal cover is $\C$ if $g(\Gamma_P)=1$ and $\D$ if
$g(\Gamma_P)\geq2$  \cite[p.~443]{Gamelin}, \cite[Thm.~4.4.1 \& Cor.~4.4.1]{Jost}.  
If $n\geq4$, then $g(\Gamma_P)\geq3$, so its universal cover is $\D$ and hence every holomorphic map from $\C$ to $\Gamma_P$ lifts to a  holomorphic map from $\C$ to $\D$, which is constant by Liouville.
For $n=3$, however, $\Gamma_P$ has genus $1$ and universal cover $\C$.
Nonconstant meromorphic solutions can appear.
Let $\wp$ be the Weierstrass elliptic function such that
$(\wp')^2=4\wp^3-1$ \cite[Ch.~7, Sec.~3.3]{Ahlfors}.  Then
\begin{equation*}
f=1+\frac{\wp'}{\sqrt3},
\qquad
g=e^{2\pi i/3}\left(1-\frac{\wp'}{\sqrt3}\right),
\quad\text{and}\quad
h=2\wp
\end{equation*}
satisfy $f^3+g^3=h^3$ \cite{Baker,GrossBAMS,GrossMonthly}. 
Restricting to a pole-free disk and reparametrizing by $\D$ gives a nonconstant holomorphic solution on $\D$.
Thus, hyperbolic uniformization assists in the case $n \geq 4$, while the cubic case requires additional information.
This distinction is reflected in the two parts of Theorem~\ref{Theorem:Main}.
\end{remark}

This paper is structured as follows.  Section \ref{Section:Preliminaries} contains preliminary material common to both parts
of Theorem \ref{Theorem:Main}.  The proofs of parts (a) and (b) of Theorem \ref{Theorem:Main}
are in Sections \ref{Section:ProofA} and \ref{Section:ProofB}, respectively.  

\smallskip\noindent\textbf{Acknowledgments.}
The author acknowledges fruitful conversations with ChatGPT 5.6.  However, any errors or omissions are the fault of the author.

\section{Preliminaries}\label{Section:Preliminaries}
This section contains material needed for the proofs of both parts of Theorem~\ref{Theorem:Main}.
Material specific to parts (a) and (b)  appears in Sections \ref{Section:ProofA} and \ref{Section:ProofB}, respectively.  

\subsection{Conjugation and continuation}\label{Subsection:Pseudo}
Recall that $f \mapsto \overline{fz}\theta$ is a conjugation on $K_{\theta}^p$:
it is conjugate linear, isometric, and involutive \cite{CBSTK,CCO,IMSTO}.
For each $f \in K_{\theta}^p$, there is a unique $\widetilde{f} \in K_{\theta}^p$ such that
\begin{equation}\label{eq:Conjugate}
f = \theta \overline{z \widetilde{f}}
\end{equation}
almost everywhere on $\T$. If $F$ is holomorphic on $\D$, then its reflection 
\begin{equation}\label{eq:Reflection}
F^{\#}(z)=\overline{F(1/\overline z)}
\end{equation}
through $\T$ is holomorphic on $\De=\{z\in\widehat{\C}:|z|>1\}$, in which $\widehat{\C} = \C \cup \{ \infty\}$.

If $I \subseteq \T \setminus (\sigma(\theta)\cap\T)$ is an open arc, then $\theta$ and every member of $K_{\theta}^p$ continue analytically across $I$ \cite[Thm.~7.3.4.i \& Prop.~7.3.8]{IMSTO}.  On a neighborhood of $I$,
\begin{equation}\label{eq:PseudocontinuationLocal}
f = \frac{\theta}{z} \widetilde{f}^{\#}, \qquad
g = \frac{\theta}{z} \widetilde{g}^{\#} , \quad \text{and} \quad
h= \frac{\theta}{z} \widetilde{h}^{\#}
\end{equation}
for all $f,g,h \in K_{\theta}^p$.  Wherever the projective triples are defined,
\begin{equation*}
[f:g:h] = [\widetilde{f}^{\#}:\widetilde{g}^{\#}:\widetilde{h}^{\#}];
\end{equation*}
that is, the common factor $\theta/z$ disappears upon projectivization.

Suppose that $P$ is homogeneous of degree $n$ and
$P(f,g,h)=0$ on $\D$.  This identity extends across $I$, and on the exterior side of the
preceding neighborhood we have
\begin{equation}
0 =P(f,g,h) =P\bigg(\frac{\theta}{z}\widetilde{f}^{\#}, \frac{\theta}{z}\widetilde{g}^{\#}, \frac{\theta}{z}\widetilde{h}^{\#}\bigg)
=\bigg(\frac{\theta}{z}\bigg)^n P\big(\widetilde{f}^{\#},\widetilde{g}^{\#},\widetilde{h}^{\#} \big).
\end{equation}
Since $\theta/z$ is nonvanishing there, we have
\begin{equation}\label{eq:ExteriorPolynomialRelation}
P(\widetilde{f}^{\#},\widetilde{g}^{\#},\widetilde{h}^{\#})=0
\end{equation}
on $\De$ by the identity theorem.

\subsection{Projective removability}\label{Subsection:Remove}
On a plane curve determined by a nonconstant homogeneous polynomial, common zeros and poles of meromorphic
functions $f,g,h$ are removable after projectivization.
The next lemma generalizes \cite[Lem.~2.1]{GarciaFLT},
which treated the Fermat polynomial $P(x,y,z) = x^n+y^n-z^n$.  
No smoothness or irreducibility assumption on $\Gamma_P$ is needed.

\begin{lemma}\label{Lemma:Removable}
Let $P(x,y,z)$ be a nonconstant homogeneous polynomial, let $\Omega$ be a Riemann surface, and let $f,g,h$ 
be meromorphic functions on $\Omega$, not all identically zero, such that $P(f,g,h) = 0$.
Then $\Phi=[f:g:h]$, initially defined where $f,g,h$ are finite and not all zero, extends holomorphically to a map
$\Phi:\Omega \to \Gamma_P$.  In particular, any common zeros and poles of $f,g,h$ are projectively removable.
\end{lemma}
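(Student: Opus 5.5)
The proof is local. It rests on the standard fact that a meromorphic map into projective space extends holomorphically across isolated points once we clear common zeros and poles. Removability is a local question, so fix a point $z_0\in\Omega$ and a local coordinate $t$ centered at $z_0$. Since $f,g,h$ are not all identically zero on the connected surface $\Omega$, each one that is not identically zero has a finite order at $z_0$. Let $m$ be the minimum of $\ord_{z_0}f$, $\ord_{z_0}g$, $\ord_{z_0}h$ over those functions that are not identically zero, where $\ord_{z_0}0=+\infty$. Then set
\[
f_0=t^{-m}f,\qquad g_0=t^{-m}g,\qquad h_0=t^{-m}h .
\]
These are holomorphic near $z_0$ and do not all vanish at $z_0$.

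On the punctured neighborhood where $f,g,h$ are finite and not all zero, we have $[f:g:h]=[f_0:g_0:h_0]$, because the factor $t^{-m}$ is nonzero there. The right-hand side is a holomorphic map into $\PP^2$ on the whole neighborhood. To see this, suppose for instance $f_0(z_0)\neq0$. Near $z_0$, in the affine chart $x\neq0$, the map is $(g_0/f_0,h_0/f_0)$, which is holomorphic. Different choices of local coordinate or of chart agree on overlaps, since they differ by nonvanishing factors. The isolated points where $f,g,h$ have common zeros or poles are discrete, so the extended map $\Phi$ is well defined and holomorphic on all of $\Omega$.

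It remains to check that $\Phi$ takes values in $\Gamma_P$. Homogeneity of degree $n$ gives
\[
P(f_0,g_0,h_0)=t^{-mn}P(f,g,h)=0
\]
near $z_0$. Since $(f_0,g_0,h_0)(z)\neq\mathbf 0$, this says $\Phi(z)\in\Gamma_P$ for every $z$, including $z_0$. Continuity together with the closedness of $\Gamma_P$ gives the same conclusion.

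No step is a real obstacle. The only point needing care is the case where one or two of $f,g,h$ vanish identically, which the convention $\ord 0=+\infty$ handles. Smoothness of $\Gamma_P$ is never used, because the map is only required to be holomorphic into $\PP^2$ with image in the subset $\Gamma_P$. If $\Gamma_P$ is singular, holomorphy into $\Gamma_P$ is understood in this sense.
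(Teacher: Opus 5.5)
Your proposal is correct and follows the paper's proof essentially step for step. You factor out $t^{m}$, where $m$ is the minimal order at the point (with $\ord 0=\infty$), and note that $[f:g:h]=[f_0:g_0:h_0]$ on a punctured neighborhood; homogeneity then gives $P(f_0,g_0,h_0)\equiv 0$ through the center, and the local extensions glue, with smoothness of $\Gamma_P$ never used, exactly as in the paper.
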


\begin{proof}
Let $n = \deg P$, fix $a\in\Omega$, and fix a local coordinate $w$ centered at $a$. 
With the convention $\ord_a 0=\infty$, let $m = \min\{ \ord_a f, \ord_a g, \ord_a h\}$.
Since $f,g,h$ are not all identically zero, $m$ is finite.  On a neighborhood of $a$, we have
\begin{equation}
f(w)=w^{m}f_0(w), \qquad
g(w)=w^{m}g_0(w), \quad \text{and} \quad
h(w)=w^{m}h_0(w),
\end{equation}
in which $f_0$, $g_0$, and $h_0$ are holomorphic near $a$ and at least one of
$f_0(a), g_0(a), h_0(a)$ is nonzero.  
Thus, $[f_0:g_0:h_0]$ is a holomorphic map into $\P^2$ on a neighborhood of $a$.
Since $P(x,y,z)$ is homogeneous of degree $n$, we get
$P(f,g,h) = w^{mn}P(f_0,g_0,h_0) = 0$
on a punctured neighborhood of $a$.
Hence $P(f_0,g_0,h_0) = 0$ there, and therefore at $a$ too.
Thus, $[f_0:g_0:h_0]$ maps into $\Gamma_P$.  On the punctured neighborhood of $a$, we have
$[f_0:g_0:h_0] = [f:g:h]$, which yields a holomorphic extension of the projective map across $a$.
Since $a \in \Omega$ is arbitrary and the local holomorphic extensions agree on overlaps, they
combine to give a holomorphic map $\Phi:\Omega\to \Gamma_P$.
\end{proof}

\section{Proof of Theorem~\ref{Theorem:Main}.a}\label{Section:ProofA}

The proof of Theorem~\ref{Theorem:Main}.a involves some material which may not be
familiar to all readers.  Consequently, we proceed in a deliberate manner since we hope that these techniques can be more broadly applied to similar problems in the future.

\subsection{The spherical derivative of a holomorphic curve}\label{Subsection:SphericalDerivative}

Let $\Omega\subset\C$ be a domain; that is, a nonempty, connected, open set.   Suppose that
$\Phi:\Omega\to\PP^2$ is holomorphic. Locally, we have $\Phi=[f:g:h]$, in which $f,g,h$ are holomorphic with no common zeros.  
The \emph{spherical derivative} of $\Phi$ is defined by
\begin{equation}\label{eq:SphericalDerivative}
\Phi^{\sharp} =  \bigg( \frac{ |fg'-gf'|^2 +  |fh'-hf'|^2 + |gh'-hg'|^2 }{ ( |f|^2+|g|^2+|h|^2 )^2 } \bigg)^{1/2};
\end{equation}
see \cite[Sec.~2, (2.2)--(2.5)]{CherryEremenko} for alternate formulas.
This is the two-dimensional case of the \emph{Fubini--Study derivative} \cite[p.~1397]{BarrettEremenko}.
If we replace $\PP^2$ with $\PP^1$ (the Riemann sphere) and let $\Phi=[1:f]$, then \eqref{eq:SphericalDerivative} becomes 
$\Phi^{\sharp} = |f'|/(1+|f|^2)$, the classical spherical derivative of $f$ \cite[p.~2]{EremenkoNormal}.
We use the ``sharp'' superscript ${}^{\sharp}$ here instead of ${}^{\#}$, which we used in \eqref{eq:Reflection} to denote
holomorphic reflection through $\T$.  

Observe that $\Phi^{\sharp}$ depends only on the projective map $\Phi$, not on its representation;
this is mentioned in passing below \cite[eq.~(3)]{EremenkoNormal}, although in our case it is easily verified.
That is, $[f:g:h]^{\sharp} = [ \phi f : \phi g : \phi h]^{\sharp}$ when $\phi$ is nonvanishing and holomorphic.  
If we replace $(f,g,h)$ with $(\phi f, \phi g, \phi h)$ in \eqref{eq:SphericalDerivative}, then 
$|fg'-gf'|^2$ becomes $| \phi|^4 | fg'-gf'|^2$ and similarly for the other two terms in the numerator.
The denominator scales by $|\phi|^4$, which cancels with the same factor from the numerator.
In particular, when a projective map is initially represented by meromorphic coordinates having common zeros or
poles, these may first be removed as in Lemma~\ref{Lemma:Removable}
without changing $\Phi^{\sharp}$.

If $U\subset\C$ is a domain and $\psi:U\to\Omega$ is holomorphic, then we have the chain rule:
\begin{equation}\label{eq:SphericalChainRule}
(\Phi\circ\psi)^{\sharp} = |\psi'| (\Phi^{\sharp}\circ\psi) 
\end{equation}
since
\begin{equation*}
(f\circ\psi)(g\circ\psi)'  -(g\circ\psi)(f\circ\psi)' = \psi' \big( (fg'-gf')\circ\psi\big),
\end{equation*}
and similarly for the other two terms from the numerator of \eqref{eq:SphericalDerivative}.  Substitute these results 
into \eqref{eq:SphericalDerivative} to obtain the desired formula \eqref{eq:SphericalChainRule}.

\begin{lemma}\label{Lemma:SphericalCritical}
Let $\Omega\subset\C$ be a domain and let $\Phi:\Omega\to\PP^2$ be holomorphic.  Then
$\Phi^{\sharp}(z_0)=0$ if and only if $\Phi'(z_0)=0$.
\end{lemma}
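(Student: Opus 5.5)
We need to show that $\Phi^{\sharp}(z_0)=0$ exactly when $\Phi'(z_0)=0$. Here $\Phi'(z_0)=0$ means the differential of $\Phi$ at $z_0$ vanishes, viewed as a map into the complex manifold $\PP^2$ via its affine charts. The plan is to work locally with a reduced representation $\Phi=[f:g:h]$ near $z_0$, where $f,g,h$ are holomorphic with no common zero. Since the numerator of \eqref{eq:SphericalDerivative} is a sum of squared moduli and the denominator is positive, $\Phi^{\sharp}(z_0)=0$ if and only if the three Wronskians $fg'-gf'$, $fh'-hf'$, $gh'-hg'$ all vanish at $z_0$. So the task reduces to relating these Wronskians to the derivative of $\Phi$ in an affine chart.

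Because $f,g,h$ have no common zero, at least one coordinate is nonzero at $z_0$. Permuting coordinates, which is harmless since \eqref{eq:SphericalDerivative} is symmetric, we may assume $f(z_0)\neq0$. We then use the independence of $\Phi^{\sharp}$ from the representation, established above, to replace $(f,g,h)$ by $(1,g/f,h/f)$ near $z_0$.

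In the affine chart $\{[1:u:v]\}\cong\C^2$, the map $\Phi$ is given by $z\mapsto(u(z),v(z))=(g/f,h/f)$, and $\Phi'(z_0)=0$ means $u'(z_0)=v'(z_0)=0$. With the representation $(1,u,v)$, the three Wronskians become $u'$, $v'$, and $uv'-vu'$. If $u'(z_0)=v'(z_0)=0$, all three vanish, so $\Phi^{\sharp}(z_0)=0$. Conversely, if $\Phi^{\sharp}(z_0)=0$, then in particular the first two Wronskians $u'(z_0)$ and $v'(z_0)$ vanish, so $\Phi'(z_0)=0$.

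If one prefers not to normalize, the same conclusion follows directly: $f^2 u'=fg'-gf'$ and $f^2 v'=fh'-hf'$, with $f(z_0)\neq0$. I would also remark that the vanishing of the differential does not depend on the choice of chart, since transition maps are biholomorphic. The only subtle point is this interpretation of $\Phi'(z_0)=0$ for a map into $\PP^2$; everything else is a direct computation.
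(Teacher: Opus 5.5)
Your proposal is correct and follows essentially the same route as the paper: a reduced local lift with $f(z_0)\neq0$, the affine coordinates $u=g/f$ and $v=h/f$, and the identities $fg'-gf'=f^2u'$ and $fh'-hf'=f^2v'$, which show that the Wronskians vanish at $z_0$ exactly when $u'(z_0)=v'(z_0)=0$. The paper substitutes these identities into \eqref{eq:SphericalDerivative} to get an explicit affine-chart formula for $(\Phi^{\sharp})^2$, which is equivalent to your normalization to $(1,u,v)$ via representation independence.
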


\begin{proof}
Write $\Phi=[f:g:h]$ near $z_0$, in which $f,g,h$ are holomorphic with no common zeros.  
Without loss of generality, suppose that $f(z_0) \neq 0$. On a neighborhood of $z_0$, let
$u = g/f$ and $v = h/f$.
In the corresponding affine chart of $\PP^2$, the map $\Phi$ is represented by
$z\mapsto (u(z),v(z))$. Moreover,
\begin{equation}
fg'-gf'=f^2u', \qquad
fh'-hf'=f^2v', \quad\text{and} \quad
gh'-hg'=f^2(uv'-vu').
\end{equation}
Substitute these identities into \eqref{eq:SphericalDerivative} and obtain
\begin{equation}
(\Phi^{\sharp})^2 = \frac{|u'|^2+|v'|^2+|uv'-vu'|^2}{(1+|u|^2+|v|^2)^2}.
\end{equation}
The denominator is positive, so $\Phi^{\sharp}(z_0)=0$ if and only if $u'(z_0)=v'(z_0)=0$.
But this is precisely the condition that the derivative of the local coordinate representation
$z\mapsto (u(z),v(z))$ vanishes at $z_0$.  Thus, $\Phi'(z_0) = 0$ if and only if $\Phi^{\sharp}(z_0)=0$.
\end{proof}

\subsection{Potential theory}\label{Subsection:Potential}
Let $L^1_{\mathrm{loc}}(\Omega)$ denote the space of locally integrable functions on $\Omega$ with respect to
planar Lebesgue measure $dA(z)=dx\,dy$ and let $C_{\mathrm{c}}^\infty(\Omega)$ denote the space of infinitely differentiable
functions on $\Omega$ with compact support.
Each $u\in L^1_{\mathrm{loc}}(\Omega)$ defines a distribution by
\begin{equation*}
\langle u,\phi\rangle = \int_\Omega u(z)\phi(z)\,dA(z)
\end{equation*}
in which $\phi\in C_{\mathrm{c}}^\infty(\Omega)$.
Its \emph{distributional Laplacian} is the distribution $\Delta u$ defined by
\begin{equation*}
\langle\Delta u,\phi\rangle
=
\int_\Omega u(z)\Delta\phi(z)\,dA(z),
\end{equation*}
in which $\phi\in C_{\mathrm{c}}^\infty(\Omega)$ and
\begin{equation}\label{eq:Delta}
\Delta =
\frac{\partial^2}{\partial x^2} + \frac{\partial^2}{\partial y^2}
= 4\frac{\partial^2}{\partial z\,\partial\overline{z}}.
\end{equation}
If $u$ is subharmonic and not identically $-\infty$, then $u\in L^1_{\mathrm{loc}}(\Omega)$ \cite[Thm.~2.5.1]{Ransford}
and its distributional Laplacian is a positive Radon measure \cite[Sec.~3.7]{Ransford}; that is,
$\langle \Delta u,\phi\rangle \geq 0$ for every nonnegative $\phi\in C_{\mathrm{c}}^\infty(\Omega)$.
The corresponding \emph{Riesz measure} is
\begin{equation*}
\mu_u=\frac{1}{2\pi}\Delta u.
\end{equation*}
This normalization ensures that $\Delta\log|z-a|=2\pi\delta_a$
in the sense of distributions. Thus, the Riesz measure of $\log|z-a|$ is the point mass $\delta_a$ \cite[Thm.~3.7.4]{Ransford}.

\begin{lemma}\label{Lemma:RieszSpherical}
Let $\Phi:\Omega\to\PP^2$ be holomorphic and suppose that $\Phi=[f:g:h]$
wherever the projective triple is defined, in which $f,g,h$ are holomorphic
functions on $\Omega$, not all identically zero.  Then
\begin{equation}\label{eq:LogFGH}
u = \frac{1}{2}\log\big( |f|^2+|g|^2+|h|^2 \big)
\end{equation}
is subharmonic and not identically $-\infty$.  Moreover,
\begin{equation}\label{eq:RieszSharp}
d\mu_u = \frac{1}{\pi}(\Phi^{\sharp})^2\,dA + \sum_a m_a\delta_a,
\end{equation}
in which the sum runs over the common zeros of $f,g,h$ and
\begin{equation}
m_a=\min\{\ord_a f,\ord_a g,\ord_a h\}.
\end{equation}
In particular,
\begin{equation}\label{eq:RieszDominatesSharp}
\frac{1}{\pi}(\Phi^{\sharp})^2\,dA\leq d\mu_u.
\end{equation}
\end{lemma}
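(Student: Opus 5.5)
Subharmonicity comes first. Write $F=(f,g,h)$, a holomorphic map $\Omega\to\C^3$ not identically zero. Then $u=\log\|F\|$, where $\|\cdot\|$ is the Euclidean norm. We have $\log\|F\|=\sup_{\|c\|=1}\log|c_1f+c_2g+c_3h|$, a supremum of subharmonic functions, and it is continuous as a map into $[-\infty,\infty)$. Hence $u$ is subharmonic. It is not identically $-\infty$ because the zero set of $F$ is discrete: some coordinate is not identically zero.

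Next I localize. Both sides of \eqref{eq:RieszSharp} are Radon measures, so it suffices to identify them on a neighborhood of each point $a\in\Omega$. Near $a$, write $F=(z-a)^{m_a}F_0$ as in the proof of Lemma~\ref{Lemma:Removable}, with $F_0=(f_0,g_0,h_0)$ holomorphic and $F_0(a)\neq0$. This gives
\begin{equation*}
u=m_a\log|z-a|+\log\|F_0\|.
\end{equation*}
Here $m_a=0$ if $a$ is not a common zero. The first term has Riesz measure $m_a\delta_a$ by the normalization stated above. The second term is smooth because $F_0$ does not vanish. By the remark after \eqref{eq:SphericalDerivative}, $\Phi^\sharp$ computed from $F_0$ agrees with $\Phi^\sharp$ computed from $F$ away from $a$. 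Since $F_0$ is a nonvanishing representative, it also gives the continuous extension of $\Phi^\sharp$ across $a$. The problem therefore reduces to the pointwise identity
\begin{equation*}
\Delta\log\|F_0\| = 2(\Phi^\sharp)^2
\end{equation*}
for a nonvanishing holomorphic $F_0$. This identity yields density $\frac{1}{2\pi}\cdot 2(\Phi^\sharp)^2=\frac1\pi(\Phi^\sharp)^2$, as required.

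The main step is this computation. Using $\Delta=4\partial\bar\partial$ and $\log\|F_0\|=\frac12\log\|F_0\|^2$, we get
\begin{equation*}
\Delta\log\|F_0\|=2\,\partial\bar\partial\log\|F_0\|^2=2\,\frac{\|F_0\|^2\|F_0'\|^2-|\langle F_0',F_0\rangle|^2}{\|F_0\|^4}.
\end{equation*}
Here $\partial\|F_0\|^2=\langle F_0',F_0\rangle$ and $\partial\bar\partial\|F_0\|^2=\|F_0'\|^2$, because the coordinates are holomorphic. Lagrange's identity then gives
\begin{equation*}
\|F_0\|^2\|F_0'\|^2-|\langle F_0',F_0\rangle|^2=\sum_{i<j}|F_{0,i}F_{0,j}'-F_{0,j}F_{0,i}'|^2,
\end{equation*}
which is exactly the numerator in \eqref{eq:SphericalDerivative}. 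The only care needed is to keep the factor conventions consistent: the factor $4$ in $\Delta$, the factor $\frac12$ in $u$, and the $\frac1{2\pi}$ in $\mu_u$.

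Finally, gluing the local identities over a cover of $\Omega$ gives \eqref{eq:RieszSharp}. The common zeros are discrete, so the sum of point masses is locally finite. Since each $m_a\ge0$, the inequality \eqref{eq:RieszDominatesSharp} follows immediately.
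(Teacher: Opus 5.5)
Your proof is correct and follows the paper's argument: the local factorization $u=m_a\log|z-a|+\log\|F_0\|$, the identity $\Delta\log\|F_0\|=2(\Phi^{\sharp})^2$, the point mass $m_a\delta_a$ from the logarithmic term, and gluing over the discrete set of common zeros. The differences are minor. You get subharmonicity globally from $\log\|F\|=\sup_{\|c\|=1}\log|c_1f+c_2g+c_3h|$, whereas the paper uses $\Delta u_0\geq 0$ together with subharmonicity of $\log|w|$. You also verify the Laplacian identity explicitly via Lagrange's identity, where the paper cites Cherry--Eremenko or a "straightforward computation."
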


\begin{proof}
Since $f,g,h$ are not all identically zero, $u$ is not identically $-\infty$. Fix $a\in\Omega$ and let $w$ be a local coordinate centered at $a$ and define
$m=\min\{\ord_a f,\ord_a g,\ord_a h\}$, which is nonnegative. On a neighborhood of $a$, write $f=w^m f_0$, $g=w^m g_0$, and $h=w^m h_0$, in which $f_0,g_0,h_0$ are holomorphic and have no common zero near $a$.  Thus, $\Phi=[f_0:g_0:h_0]$ on this neighborhood. Let
\begin{equation}
u_0 = \frac{1}{2}\log(|f_0|^2+|g_0|^2+|h_0|^2 ),
\end{equation}
which is smooth since $f_0,g_0,h_0$ have no common zeros.
Then
\begin{equation}\label{eq:TakeLaplace}
u=m\log|w|+u_0,
\end{equation}
so \eqref{eq:Delta} and \cite[eq.~(2.2)]{CherryEremenko} (or a straightforward computation) give
\begin{equation}\label{eq:DeltaSphere}
\Delta u_0
= 4\frac{\partial^2 u_0}{\partial z\,\partial\overline z}\\
= 2\frac{\partial^2}{\partial z\,\partial\overline z} \log(|f_0|^2+|g_0|^2+|h_0|^2 )\\
= 2(\Phi^{\sharp})^2.
\end{equation}
In particular, $\Delta u_0\geq0$, so $u_0$ is subharmonic \cite[Thm.~2.4.4]{Ransford}.  Since $\log|w|$ is subharmonic,
\eqref{eq:TakeLaplace} shows that $u$ is subharmonic.  This also follows from \cite[Prop.~5.2]{Berg}.

In the sense of distributions,
\begin{equation}\label{eq:PointMasses}
\frac{1}{2\pi}\Delta\log|w|=\delta_a;
\end{equation}
see \cite[Thm.~3.7.8, p.~76]{Ransford}.  Apply the distributional
Laplacian to \eqref{eq:TakeLaplace}, use \eqref{eq:DeltaSphere} and \eqref{eq:PointMasses}, and deduce that
the following holds locally
\begin{equation}
d\mu_u = \frac{1}{\pi}(\Phi^{\sharp})^2\,dA + m \delta_a.
\end{equation}
The common zeros of $f,g,h$ are discrete in $\Omega$ since at least one of these functions is not identically zero.  Thus, the 
local identities above combine to give
\begin{equation}
d\mu_u =\frac{1}{\pi}(\Phi^{\sharp})^2\,dA + \sum_a m_a\delta_a,
\end{equation}
in which the sum is locally finite.  This is \eqref{eq:RieszSharp}, from which
\eqref{eq:RieszDominatesSharp} follows.
\end{proof}

\begin{remark}\label{Remark:LebesgueDecomposition}
The formula \eqref{eq:RieszSharp} is the Lebesgue decomposition of the Riesz measure with respect to area measure: its absolutely continuous part is $\frac{1}{\pi}(\Phi^{\sharp})^2\,dA$ and
its singular part is $\sum_a m_a\delta_a$.  Thus, \eqref{eq:RieszDominatesSharp} amounts to discarding the singular part.
\end{remark}

\subsection{Nevanlinna representations}\label{Subsection:Nevanlinna}
Let $\Omega \subseteq \C$ be a domain; that is, a nonempty, connected, open set.
Recall that a holomorphic function $f$ on $\Omega$ belongs to the \emph{Nevanlinna class} $\N(\Omega)$
if the subharmonic function $\log^+|f|$ has a harmonic majorant on $\Omega$ \cite[p.~66]{Garnett};
here $\log^+ x = \max\{\log x,0\}$ (the subharmonicity of $\log^+|f|$ is \cite[Thm.~17.3]{Rudin}).
The cases $\Omega = \D$,  $\Omega = \Cp$ (upper half-plane), and $\Omega = \Cm$ (lower half-plane) are well studied and appear in the standard sources \cite{Duren, Garnett, Koosis, Nikolski, Conway, Rudin}.

The Nevanlinna class is conformally invariant \cite[p.~66]{Garnett}: if $\phi:\Omega_1\to\Omega_2$ is a conformal map, then
$f\in\N(\Omega_2)$ if and only if $f\circ\varphi\in\N(\Omega_1)$  since harmonic majorants pull back under conformal maps.
The analogous statement holds for anticonformal maps: if $\varphi:\Omega_1\to\Omega_2$ is anticonformal and $f\in\N(\Omega_2)$, then the holomorphic map $z\mapsto \overline{f(\varphi(z))}$ belongs to $\N(\Omega_1)$.

A \emph{projective lift} of a holomorphic map $\Phi:\Omega\to\PP^2$ is a holomorphic triple $(f,g,h)$ with no common zeros
such that $\Phi=[f:g:h]$ on $\Omega$.  Such lifts are not unique, since multiplication of all three
components by the same nowhere-vanishing holomorphic function leaves the
projective map unchanged.

We also use the weaker notion of a \emph{projective representation}.  A holomorphic projective representation is a holomorphic triple $(f,g,h)$,
not all identically zero, which represents $\Phi$ wherever the projective triple is defined.  Common zeros are locally removable after
projectivization by the same factorization argument used in the proof of Lemma~\ref{Lemma:Removable}.  
A \emph{meromorphic projective representation} permits meromorphic components; common zeros and poles are removed
projectively in the same manner.  This distinction is useful since triples that arise from pseudocontinuation need not initially be lifts: they
may have common zeros or poles even though the associated projective map is holomorphic.

\begin{lemma}\label{Lemma:Nevanlinna}
Let $P(x,y,z)$ be a nonconstant homogeneous polynomial, let $\theta$ be an inner function with $\#(\sigma(\theta)\cap \T)\leq 1$, and suppose that
$f,g,h\in K_{\theta}^p$ are not all identically zero and satisfy $P(f,g,h)= 0$.
Then there is a holomorphic map $\Phi:\C\to \Gamma_P$
whose restrictions to $\Cp$ and $\Cm$ admit projective representations whose coordinates belong to $\N(\Cp)$ and $\N(\Cm)$, respectively.
\end{lemma}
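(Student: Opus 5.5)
Let $\zeta$ denote the point of $\sigma(\theta)\cap\T$ if this set is nonempty; otherwise choose any $\zeta\in\T$. Conjugate by a Möbius transformation $M$ of $\widehat{\C}$ sending $\zeta$ to $\infty$, $\D$ onto $\Cp$ and $\De$ onto $\Cm$, for instance $M(z)=i(\zeta+z)/(\zeta-z)$. The plan is to build a meromorphic triple on $\widehat{\C}\setminus\{\zeta\}$ and then transport it to $\C$ by $M^{-1}$.

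\textbf{Step 1: the global projective map.} On $\D$ use the triple $(f,g,h)$. On $\De$ use $(\widetilde f^{\#},\widetilde g^{\#},\widetilde h^{\#})$. By \eqref{eq:ExteriorPolynomialRelation} the exterior triple also satisfies $P=0$. These reflected functions are not all identically zero, because the conjugation $f\mapsto\widetilde f$ is isometric and $f,g,h$ are not all zero.

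The arc $I=\T\setminus\{\zeta\}$ avoids the boundary spectrum. Hence \eqref{eq:PseudocontinuationLocal} holds on a neighborhood $U$ of $I$, where both triples continue analytically. There the interior triple equals $(\theta/z)$ times the exterior triple, so the two triples define the same projective map on $U$ wherever defined. Thus we get meromorphic functions $F,G,H$ on $\Omega=\widehat{\C}\setminus\{\zeta\}$: they equal $(f,g,h)$ on $\D\cup U$, and $(\theta/z)$ times the exterior triple on $\De\cup U$. In $\De$ they are meromorphic, with possible zeros and poles from $\theta/z$, including at $\infty$. They satisfy $P(F,G,H)=0$, by the identity theorem and homogeneity.

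\textbf{Step 2: the holomorphic map.} Apply Lemma~\ref{Lemma:Removable} to $(F,G,H)$ on the Riemann surface $\Omega$. This yields a holomorphic map $\widehat\Phi:\Omega\to\Gamma_P$. Set $\Phi=\widehat\Phi\circ M^{-1}$, which is a holomorphic map $\C\to\Gamma_P$, since $M(\Omega)=\C$.

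\textbf{Step 3: Nevanlinna representations.} On $\Cp$, the triple $(f\circ M^{-1},g\circ M^{-1},h\circ M^{-1})$ is a holomorphic projective representation of $\Phi$. Its coordinates lie in $\N(\Cp)$, because $H^p\subset H^1\subset\N(\D)$ and the Nevanlinna class is conformally invariant.

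On $\Cm$, use $\big(\widetilde f^{\#}\circ M^{-1},\widetilde g^{\#}\circ M^{-1},\widetilde h^{\#}\circ M^{-1}\big)$. This represents $\Phi$ on $\Cm$, since on $\De$ the map $\widehat\Phi$ equals $[\widetilde f^{\#}:\widetilde g^{\#}:\widetilde h^{\#}]$ wherever that is defined. Note that $\widetilde f^{\#}(w)=\overline{\widetilde f(1/\overline w)}$ with $\widetilde f\in\N(\D)$, and $w\mapsto 1/\overline{M^{-1}(w)}$ is an anticonformal map of $\Cm$ onto $\D$. The anticonformal version of invariance recorded above therefore gives membership in $\N(\Cm)$. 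These are holomorphic, not merely meromorphic, since $\widetilde f$ is holomorphic on $\D$.

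\textbf{Main obstacle.} The main point of care is the gluing in Step 1. One must check that the neighborhood $U$ can be taken to straddle all of $I$ up to $\zeta$, and that the two local descriptions agree projectively on a connected overlap. The overlap $U\setminus\T$ meets both $\D$ and $\De$, so uniqueness of continuation settles the agreement. The other point is that $\Phi$ genuinely extends across all of $\C$: $\infty\in\De$ corresponds under $M$ to the finite point $M(\infty)=-i$ of $\Cm$, and that point is handled by Lemma~\ref{Lemma:Removable}, as are any common zeros or poles.
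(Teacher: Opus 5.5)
Your proposal is correct and follows essentially the paper's proof: your $M^{-1}$ is exactly the paper's $\kappa(z)=\zeta\frac{z-i}{z+i}$, and the Nevanlinna memberships come from the same conformal and anticonformal invariance. The only variation is that you glue the triples themselves into meromorphic functions on $\widehat{\C}\setminus\{\zeta\}$. This tacitly uses that $\theta$ extends meromorphically to $\De$ as $1/\overline{\theta(1/\overline{z})}$, which agrees with its continuation across $\T\setminus\{\zeta\}$ by reflection since $|\theta|=1$ there, and you should state this. The paper instead glues the projective maps $[f:g:h]$ and $[\widetilde f^{\#}:\widetilde g^{\#}:\widetilde h^{\#}]$ directly, so the factor $\theta/z$ is needed only near the arc, where it cancels projectively.
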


\begin{proof}
Since $E=\sigma(\theta)\cap\T$ contains at most one point, choose $\zeta\in\T$ such that $E\subseteq\{\zeta\}$.
For $f,g,h\in K_{\theta}^p$, let $\widetilde{f} = \overline{fz}\theta$, $\widetilde{g} = \overline{gz}\theta$, and $\widetilde{h} =\overline{hz}\theta$, respectively, denote their conjugates in $K_{\theta}^p$. Lemma~\ref{Lemma:Removable} ensures that the projective triple
$\Phi_{\mathrm{in}}=[f:g:h]$ extends to a holomorphic map $\Phi_{\mathrm{in}}:\D\to\Gamma_P$.

On $\De$, consider
\begin{equation*}
\Phi_{\mathrm{out}} = [\widetilde{f}^{\#}: \widetilde{g}^{\#}: \widetilde{h}^{\#}];
\end{equation*}
the three coordinates are not all identically zero.  Then \eqref{eq:ExteriorPolynomialRelation} ensures that
\begin{equation*}
P(\widetilde{f}^{\#},\widetilde{g}^{\#},\widetilde{h}^{\#})=0
\end{equation*}
on $\De$, so Lemma~\ref{Lemma:Removable} gives a holomorphic map $\Phi_{\mathrm{out}}:\De\to\Gamma_P$.
Every point of $\T\setminus\{\zeta\}$ lies on an open arc $I$ disjoint from $E$.  By \eqref{eq:PseudocontinuationLocal}, the projective triples defining
$\Phi_{\mathrm{in}}$ and $\Phi_{\mathrm{out}}$ agree wherever they are defined on a neighborhood of $I$.  
Since both projective maps extend holomorphically across the common zeros of their defining triples, they agree on the neighborhood.
Thus, $\Phi_{\mathrm{in}}$ and $\Phi_{\mathrm{out}}$ glue to a holomorphic map $\Psi:\widehat{\C}\setminus\{\zeta\}\to\Gamma_P$. If $E=\varnothing$, they glue across all of $\T$ and hence $\zeta$ is irrelevant.

The M\"obius transformation
\begin{equation*}
\kappa(z) = \zeta\frac{z-i}{z+i}.
\end{equation*}
is a biholomorphic map
$\kappa: \C\to \widehat{\C}\setminus\{\zeta\}$ 
with $\kappa(\Cp)=\D$ and $\kappa(\Cm)=\De$.
Thus, $\Phi=\Psi\circ\kappa$ is a holomorphic map $\Phi:\C\to\Gamma_P$.
On $\Cp$, the map $\Phi$ has the projective representation
$[f\circ\kappa: g\circ\kappa: h\circ\kappa]$. Since $\kappa:\Cp\to\D$ is conformal and
$f,g,h\in K_{\theta}^p\subseteq H^p\subseteq\N(\D)$, conformal invariance yields $f\circ\kappa,\;g\circ\kappa,\;h\circ\kappa\in\N(\Cp)$.

On $\Cm$, the map $\Phi$ has the projective representation
\begin{equation}\label{eq:LowerRepresentation}
[\widetilde{f}^{\#}\circ\kappa: \widetilde{g}^{\#}\circ\kappa: \widetilde{h}^{\#}\circ\kappa].
\end{equation}
Let $\rho(z)= 1 / \overline{\kappa(z)}$. Then $\rho:\Cm\to\D$ is anticonformal and
\begin{equation*}
\widetilde{f}^{\#}(\kappa(z)) = \overline{\widetilde{f}(\rho(z))}, \qquad
\widetilde{g}^{\#}(\kappa(z)) = \overline{\widetilde{g}(\rho(z))}, \quad\text{and}\quad
\widetilde{h}^{\#}(\kappa(z)) = \overline{\widetilde{h}(\rho(z))}.
\end{equation*}
Since $\widetilde{f},\widetilde{g},\widetilde{h}\in K_\theta^p\subseteq H^p\subseteq\N(\D)$, anticonformal invariance yields
\begin{equation*}
\widetilde{f}^{\#}\circ\kappa,\, \widetilde{g}^{\#}\circ\kappa,\, \widetilde{h}^{\#}\circ\kappa \in\N(\Cm). \qedhere
\end{equation*}
\end{proof}

\begin{remark}
There is no smoothness hypothesis on $\Gamma_P$ in the previous lemma.  
Smoothness is only required in Lemma \ref{Lemma:Elliptic} below.
\end{remark}

\subsection{A growth estimate}\label{Subsection:Growth}
The next lemma uses the Nevanlinna-class hypothesis to control a weighted
area integral involving the spherical derivative of a holomorphic map $\Phi:\C\to\PP^2$.

\begin{lemma}\label{Lemma:Growth}
Suppose that $\Phi:\C\to\PP^2$ is holomorphic and that $\Phi|_{\Cp}$ and $\Phi|_{\Cm}$ admit projective representations whose coordinates belong to $\N(\Cp)$ and $\N(\Cm)$, respectively.  Then there exists an $R>0$ such that
\begin{equation}\label{eq:WeightedSharp}
\int_{|z|>R} \frac{|\Im z|}{|z|^2} (\Phi^{\sharp}(z))^2\,dA(z) < \infty.
\end{equation}
\end{lemma}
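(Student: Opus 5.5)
Here is the plan. Work on $\Cp$ (the case $\Cm$ is symmetric via $z\mapsto\overline z$) with a Nevanlinna representation $(f,g,h)$, and set $u=\frac12\log(|f|^2+|g|^2+|h|^2)$. By Lemma~\ref{Lemma:RieszSpherical}, $u$ is subharmonic on $\Cp$ and $\frac1\pi(\Phi^\sharp)^2\,dA\le d\mu_u$. The goal then reduces to a Blaschke-type condition for the Riesz measure of $u$ on $\Cp$, namely
\begin{equation*}
\int_{\Cp} \frac{\Im z}{1+|z|^2}\,d\mu_u(z)<\infty .
\end{equation*}
This gives \eqref{eq:WeightedSharp} on $\{|z|>R\}\cap\Cp$, since $\frac{\Im z}{|z|^2}\le 2\frac{\Im z}{1+|z|^2}$ for $|z|\ge1$. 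The analogous bound on $\Cm$ completes the proof, and any $R\ge1$ works.

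To get the Blaschke condition, transfer to the disk. Let $\kappa_0:\Cp\to\D$ be a Cayley map and put $v=u\circ\kappa_0^{-1}$ on $\D$. Since each coordinate $F$ lies in the Nevanlinna class, $\log^+|F|$ has a harmonic majorant. Then
\begin{equation*}
v\le \log 3+\max\{\log^+|F_1|,\log^+|F_2|,\log^+|F_3|\}\le \log 3+\textstyle\sum\log^+|F_j|,
\end{equation*}
so $v$ is a subharmonic function on $\D$, not identically $-\infty$, with a harmonic majorant. The standard Riesz decomposition on $\D$ (the least harmonic majorant minus a Green potential) then gives
\begin{equation*}
\int_\D (1-|w|)\,d\mu_v(w)<\infty .
\end{equation*}
Indeed, finiteness of the Green potential $\int G(w,0)\,d\mu_v$ at a point where $v(0)>-\infty$ is equivalent to this, after translating the center if $v(0)=-\infty$. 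I will cite Ransford's Riesz decomposition theorem for this step, applied on discs $|w|<r$ with Jensen-type monotone limits.

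Finally, pull back. The Riesz measure transforms as a measure under the conformal map, because the Laplacian and area element scale by the same factor $|\kappa_0'|^2$, so $\mu_u=\kappa_0^*\mu_v$. The weight satisfies $1-|\kappa_0(z)|\asymp \frac{\Im z}{|z+i|^2}\asymp\frac{\Im z}{1+|z|^2}$, which follows from $1-|\kappa_0(z)|^2=\frac{4\Im z}{|z+i|^2}$. Hence the Blaschke condition on $\D$ becomes the desired weighted integrability of $\mu_u$ on $\Cp$.

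The main obstacle is the Blaschke-condition step. One must make sure that a subharmonic function with a harmonic majorant has Riesz measure satisfying $\int(1-|w|)\,d\mu<\infty$, even though $v$ may equal $-\infty$ at the center. This is handled by taking an interior point where $v$ is finite and using Green's function $G(\cdot,w_0)$, which is comparable to $1-|w|$ near $\T$. Away from $\T$, the compact part contributes a finite amount because $\mu_v$ is locally finite.
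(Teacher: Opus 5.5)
Your proposal is correct and is essentially the paper's argument. Both proofs dominate $\frac1\pi(\Phi^\sharp)^2\,dA$ by the Riesz measure of $u=\frac12\log(|f|^2+|g|^2+|h|^2)$, use the Nevanlinna hypothesis to obtain a harmonic majorant, and invoke the Riesz decomposition (least harmonic majorant minus Green potential) at a point where $u>-\infty$. The only difference is the coordinates: you transport to $\D$ by a Cayley map and use the disk Green function, which needs your extra (correct) step that Riesz measures push forward under conformal maps, whereas the paper stays on $\Cp$ and bounds $G_+(z,a)=\log|(z-\overline a)/(z-a)|\geq \beta\,\Im z/(4|z|^2)$ directly for $|z|\geq R_+$.
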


\begin{proof}
We prove the result for $\Cp$; the argument in $\Cm$ is similar.
Let $\Phi=[f:g:h]$ on $\Cp$, in which $f,g,h\in \N(\Cp)$ are not all identically zero.
Choose harmonic majorants $H_f,H_g,H_h$ of $\log^+|f|,\log^+|g|,\log^+|h|$, respectively \cite[Thm.~5.1]{Garnett}.
Lemma \ref{Lemma:RieszSpherical} ensures that $u=\frac{1}{2}\log(|f|^2+|g|^2+|h|^2)$ is subharmonic and not identically $-\infty$.
It has a harmonic majorant on $\Cp$ since
\begin{equation*}
u \leq \frac{1}{2} \log 3+ \log^+ |f|+\log^+ |g|+\log^+ |h| \leq \frac{1}{2}\log 3 + H_f+H_g+H_h.
\end{equation*}
Let $H$ denote the least harmonic majorant of $u$  \cite[Def.~4.5.3, Th.~4.5.4]{Ransford}
and let $\mu_u=(2\pi)^{-1}\Delta u$ denote the Riesz measure for $u$.

Choose $a=\alpha+i\beta\in\Cp$ with $u(a)>-\infty$ and $\beta>0$. Then let
\begin{equation}\label{eq:Rplus}
R_+\geq \max\{2|a|,16\beta\}.
\end{equation}
Recall that the Green function of $\Cp$ is
\begin{equation}
G_+(z,a) = \log\bigg|\frac{z-\overline a}{z-a}\bigg|;
\end{equation}
see \cite[Table~4.2]{Ransford}.  If $z=x+iy\in\Cp$ and $|z|\geq R_+$, then
\begin{align}
G_+(z,a)
&= \frac{1}{2} \log \frac{(x-\alpha)^2+(y+\beta)^2}{(x-\alpha)^2+(y-\beta)^2}
= \frac{1}{2}\log\bigg( 1+\frac{4\beta y}{|z-a|^2} \bigg) \nonumber\\
&\geq \frac{\beta y}{|z-a|^2} \nonumber\\
&\geq \frac{\beta y}{4|z|^2} \label{eq:GreenLowerBound}
\end{align}
since \eqref{eq:Rplus} ensures that
\begin{equation}
|z-a|\geq |z|-|a|\geq\frac{|z|}{2}
\qquad\text{and}\qquad
|z-a|\leq |z|+|a| \leq 2|z|,
\end{equation}
and hence
\begin{equation}
0 \leq \frac{4\beta y}{|z-a|^2} \leq \frac{16\beta}{|z|} \leq 1,
\end{equation}
which permits the use of the inequality $\log(1+t)\geq t/2$ for $0\leq t\leq1$.

Since $H$ is the least harmonic majorant of $u$,
\begin{equation}\label{eq:RieszGreen}
H(a) - u(a) = \int_{\Cp}G_+(z,a)\,d\mu_u(z) < \infty
\end{equation}
by \cite[Thm.~4.5.4]{Ransford}. Then Lemma~\ref{Lemma:RieszSpherical} ensures that
\begin{equation}
\frac{1}{\pi}(\Phi^{\sharp})^2\,dA
\leq
d\mu_u.
\end{equation}
Combining this with \eqref{eq:GreenLowerBound} and
\eqref{eq:RieszGreen} yields
\begin{align}
\int_{\substack{z\in\Cp\\|z|>R_+}} \frac{\Im z}{|z|^2}(\Phi^{\sharp}(z))^2\,dA(z)
&\leq \frac{4}{\beta} \int_{\Cp} G_+(z,a)(\Phi^{\sharp}(z))^2\,dA(z) \nonumber\\
&\leq \frac{4\pi}{\beta} \int_{\Cp}G_+(z,a)\,d\mu_u(z) \nonumber\\
&< \infty. \label{eq:UpperGrowth}
\end{align}

Applying the same argument on $\Cm$ provides an $R_->0$ such that
\begin{equation}
\int_{\substack{z\in\Cm\\|z|>R_-}} \frac{|\Im z|}{|z|^2}(\Phi^{\sharp}(z))^2\,dA(z) < \infty.
\end{equation}
Let $R=\max\{R_+,R_-\}$, combine the two estimates above, and obtain the desired inequality
\eqref{eq:WeightedSharp} since $\R$ has zero area measure.
\end{proof}

\subsection{The genus-one hypothesis}\label{Subsection:GenusOne}
The proof of the final lemma needed for Theorem~\ref{Theorem:Main}.a is the first time the hypothesis $n=3$ enters. 
By \eqref{eq:Genus}, the smooth plane cubic $\Gamma_P$ has genus $1$. 
We exploit the fact that smooth projective genus-$1$ curves become elliptic curves (after choosing a base point).

\begin{lemma}\label{Lemma:Elliptic}
Let $\Gamma\subset\PP^2$ be a smooth projective cubic curve and let $\Phi:\C\to\Gamma$ be holomorphic.  Suppose that the restrictions of
$\Phi$ to $\Cp$ and $\Cm$ admit projective representations whose coordinates belong to $\N(\Cp)$ and $\N(\Cm)$, respectively.  
Then $\Phi$ is constant.
\end{lemma}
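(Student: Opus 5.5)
The plan is to pass from $\Gamma$ to its universal cover $\C$ and reduce to a growth statement about an entire function. Because $\Gamma$ is a smooth cubic, it has genus one by \eqref{eq:Genus}. So there is a lattice $\Lambda\subset\C$ and a biholomorphism $\Gamma\cong\C/\Lambda$. Since $\C$ is simply connected, $\Phi$ lifts through the covering $\pi:\C\to\C/\Lambda\cong\Gamma$ to an entire function $F:\C\to\C$ with $\Phi=\pi\circ F$.

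The first step is to compare $\Phi^{\sharp}$ with $|F'|$. On the compact Riemann surface $\Gamma$ we have two smooth Hermitian metrics: the restriction of the Fubini--Study metric, whose pullback under $\Phi$ has density $\Phi^{\sharp}$, and the flat metric $|dw|$ descended from $\C$. The Fubini--Study restriction is nondegenerate because $\Gamma$ is a smooth embedded curve. By compactness the two metrics are comparable, so there are constants $0<c\leq C$ with
\[
c\,|F'(z)|\leq \Phi^{\sharp}(z)\leq C\,|F'(z)|, \qquad z\in\C.
\]
The lower bound is also consistent with Lemma~\ref{Lemma:SphericalCritical}. Combining this with Lemma~\ref{Lemma:Growth}, which applies by hypothesis, gives
\[
\int_{|z|>R}\frac{|\Im z|}{|z|^2}\,|F'(z)|^2\,dA(z)<\infty .
\]
It then suffices to show that an entire function $F'$ satisfying this bound vanishes identically. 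That makes $F$, and hence $\Phi$, constant.

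The second step controls $F'$ in sectors away from $\R$. Take $z_0$ with $|\Im z_0|\geq |z_0|/2$ and let $D=D(z_0,|z_0|/4)$. On $D$ the weight $|\Im z|/|z|^2$ is comparable to $1/|z_0|$. Subharmonicity of $|F'|^2$ then yields
\[
|F'(z_0)|^2\leq \frac{16}{\pi|z_0|^2}\int_D|F'|^2\,dA\lesssim \frac{1}{|z_0|}\int_{D}\frac{|\Im z|}{|z|^2}|F'|^2\,dA .
\]
The last integral is a tail of a convergent integral, so it tends to $0$ as $|z_0|\to\infty$. Hence $F'(z_0)=o(|z_0|^{-1/2})$ in the closed sectors $\{|\Im z|\geq |z|/2\}$.

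The main obstacle is the remaining double sector around the real axis, where the weight degenerates. There I would first show that $F'$ has finite order. For this I would use the sub-mean value property of $\log^+|F'|$ on circles $|z-z_0|=|z_0|/2$. Jensen-type inequalities convert the weighted $L^2$ bound into an $L^1$ bound on $\log^+|F'|$ off a thin neighbourhood of $\R$, and a crude estimate (e.g.\ via the maximum of $|F'|$ on a larger circle and Hadamard's three-circle argument) handles the remainder. With finite order established, apply Phragmén--Lindelöf in each sector $\{|\arg z|<\pi/3\}$ and $\{|\arg(-z)|<\pi/3\}$, whose opening $2\pi/3<\pi$ permits functions of order below $3/2$. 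If the order estimate turns out to be too weak for that, I would subdivide into narrower sectors, using the flexibility of choosing the angle in the second step. This shows that $F'$ is bounded on $\C$. Liouville's theorem then makes $F'$ constant, and since $F'\to0$ along the imaginary axis, $F'\equiv0$. Therefore $F$ is constant, and so is $\Phi$.
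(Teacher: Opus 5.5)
Your reduction is the same as the paper's. You lift $\Phi=\pi\circ F$ through the universal cover, use compactness and periodicity to get $\Phi^{\sharp}\geq c|F'|$, and invoke Lemma~\ref{Lemma:Growth} to obtain $W:=\int_R^\infty\int_0^{2\pi}|F'(re^{it})|^2|\sin t|\,dt\,dr<\infty$. Your sector estimate $F'(z_0)=o(|z_0|^{-1/2})$ for $|\Im z_0|\geq|z_0|/2$ is also correct.

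\textbf{The gap.} It lies in the step you call the main obstacle. The finite-order claim is never established, and the mechanism you suggest for it does not work. Suppose you excise $\{|\sin t|<\eta\}$ around $\R$ and bound the excised part of $\int\log^+|F'|$ by its area times $\log M(\lambda r)$, where $M(r)=\max_{|z|=r}|F'(z)|$ and $\lambda>1$. The sub-mean-value inequality then only gives a recursion $\log M(r)\leq A_\eta+C\eta\log M(\lambda r)$. Functions of arbitrarily fast growth satisfy this, for example $\log M(r)=e^{r}$, so it yields no bound on the order. Iterating it closes only if finite order is already known, which is circular. Hadamard's three-circle theorem (convexity of $\log M$ in $\log r$) holds for every entire function, so it adds no growth restriction either. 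As written, the Phragm\'en--Lindel\"of step therefore has no hypothesis to rest on.

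\textbf{The fix.} The weight degenerates only logarithmically, so nothing needs to be excised. Since $\log^+(ab)\leq\log^+a+\log^+b$ and $\log^+x\leq x$,
\[
\log^+|F'|^2\leq |F'|^2|\sin t|+\log\frac{1}{|\sin t|},
\qquad \int_0^{2\pi}\log\frac{1}{|\sin t|}\,dt=2\pi\log 2 .
\]
Take $|z_0|=r\geq 2R$ and apply the area sub-mean-value inequality to $\log^+|F'|^2$ on $D(z_0,r/2)\subset\{r/2\leq|z|\leq 3r/2\}$. This gives
\[
\log^+|F'(z_0)|^2\leq \frac{6W}{\pi r}+8\log 2,
\]
so $F'$ is bounded outright. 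Liouville's theorem together with your sector estimate then gives $F'\equiv0$. Neither the order estimate nor Phragm\'en--Lindel\"of is needed.

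\textbf{The paper's route.} The paper finishes more directly. It uses $|\sin t|\geq\sin^2t=\frac14|1-e^{2it}|^2$ and Parseval's formula. If $a_mz^m$ is the lowest-order term of $F'$, then the frequency-$m$ coefficient of $(1-e^{2it})F'(re^{it})$ is $a_mr^m$. Hence $\int_0^{2\pi}|F'(re^{it})|^2|\sin t|\,dt\geq\frac{\pi}{2}|a_m|^2r^{2m}$, which is not integrable on $[R,\infty)$, so $F'\equiv 0$.
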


\begin{proof}
Since $\Gamma$ is a smooth projective plane cubic, \eqref{eq:Genus} ensures that it has genus $1$.
The uniformization theorem provides a lattice $\Lambda\subset\C$ and universal covering map
$\pi:\C\to\Gamma$ such that $\Gamma$ is biholomorphic to $\C/\Lambda$ and
\begin{equation}\label{eq:LatticePeriodicity}
\pi(w+\lambda)=\pi(w)
\end{equation}
for all $w\in\C$ and $\lambda \in \Lambda$ \cite[Sec.~2.7 and Thm.~4.4.1]{Jost}, \cite[Ch.~VI, Sec.~5]{Silverman}, \cite[Ch.~VII, Sec.~3]{Miranda}, \cite[Ch.~III]{Farkas}. Since $\C$ is simply connected, $\Phi$ lifts through $\pi$.  Thus, there is an entire function $k$ such that
\begin{equation}\label{eq:PhiEllipticFactorization}
\Phi=\pi\circ k.
\end{equation}

The spherical derivative $\pi^{\sharp}$, defined by \eqref{eq:SphericalDerivative}, is nonnegative and continuous.
Moreover, \eqref{eq:LatticePeriodicity} and the chain rule \eqref{eq:SphericalChainRule} ensure that
$\pi^{\sharp}(w+\lambda)=\pi^{\sharp}(w)$ for all $w\in\C$ and $\lambda \in \Lambda$.
We claim that $\pi^{\sharp}(w) > 0$ for all $w\in\C$.
Indeed, fix $w_0\in\C$.  Since $\pi$ is a covering map, it is biholomorphic on a neighborhood $U$ of $w_0$.  
In local coordinates,
\begin{equation}
((\pi|_U)^{-1})'(\pi(w_0)) \pi'(w_0)=1,
\end{equation}
so $\pi'(w_0)\neq0$.  Then Lemma~\ref{Lemma:SphericalCritical} ensures that $\pi^{\sharp}(w_0)\neq 0$, so
$\pi^{\sharp}(w_0)>0$.
Continuity and periodicity ensure that $(\pi^{\sharp})^2$
attains a positive minimum on the closure of any fundamental parallelogram for $\Lambda$.  
Thus, there is a  $c>0$ such that
\begin{equation}\label{eq:PiSharpPositiveLowerBound}
(\pi^{\sharp}(w))^2 \geq c
\end{equation}
for all $w\in\C$. Then \eqref{eq:SphericalChainRule}, \eqref{eq:PhiEllipticFactorization}, and \eqref{eq:PiSharpPositiveLowerBound} yield
\begin{equation}\label{eq:PhiSharpDominatesKPrime}
(\Phi^{\sharp}(z))^2 = (\pi^{\sharp}(k(z)))^2|k'(z)|^2 \geq c|k'(z)|^2.
\end{equation}

Lemma~\ref{Lemma:Growth} provides an $R>0$ such that
\begin{equation}\label{eq:WeightedKPrime}
\int_{|z|>R} \frac{|\Im z|}{|z|^2} |k'(z)|^2\,dA(z)
\leq c^{-1} \int_{|z|>R} \frac{|\Im z|}{|z|^2} (\Phi^{\sharp}(z))^2\,dA(z) <\infty.
\end{equation}
Write $z=re^{it}$ so that $|\Im z|=r|\sin t|$ and $dA(z)=r\,dr\,dt$.
Then the left side of the inequality \eqref{eq:WeightedKPrime} becomes
\begin{equation}\label{eq:WeightedKPrimePolar}
\int_R^\infty\int_0^{2\pi} |k'(re^{it})|^2|\sin t|\,dt\,dr <\infty.
\end{equation}
We claim that $k' \equiv 0$.  Suppose toward a contradiction that
\begin{equation*}
k'(z) = a_mz^m+a_{m+1}z^{m+1}+\cdots
\end{equation*}
with $a_m \neq 0$.  Since $m$ is minimal and $|\sin t| \geq \sin^2t = \frac{1}{4}|1-e^{2it}|^2$,
the Fourier coefficient of frequency $m$ in $(1-e^{2it})k'(re^{it})$ is $a_mr^m$, so
Parseval's formula yields
\begin{equation}\label{eq:ParsevalKPrimeLowerBound}
\int_0^{2\pi}|k'(re^{it})|^2|\sin t|\,dt \geq \frac{\pi}{2}|a_m|^2r^{2m},
\end{equation}
which is not integrable on $[R,\infty)$.  Since this contradicts
\eqref{eq:WeightedKPrimePolar}, we conclude that $k' \equiv 0$, so $k$ is constant.
It follows from \eqref{eq:PhiEllipticFactorization} that $\Phi$ is constant.
\end{proof}

\subsection{Completion of the proof of Theorem~\ref{Theorem:Main}.a}\label{Section:Finish}
Suppose that $n=3$ and $\#(\sigma(\theta)\cap\T)\leq 1$.  If $f \equiv g \equiv h \equiv 0$, there is nothing to prove.  
Otherwise, Lemma~\ref{Lemma:Nevanlinna} provides a holomorphic map
$\Phi:\C\to\Gamma_P$ whose restrictions to $\Cp$ and $\Cm$ admit
projective representations with Nevanlinna-class coordinates.  Since
$\Gamma_P$ is a smooth cubic, Lemma~\ref{Lemma:Elliptic} ensures that $\Phi$ is constant.  
The construction in Lemma \ref{Lemma:Nevanlinna} and the fact that $\kappa(\C_+)=\D$ provide  an $[a:b:c]\in\Gamma_P$ such that
\begin{equation*}
[f(z):g(z):h(z)]=[a:b:c]
\end{equation*}
whenever $f(z)$, $g(z)$, and $h(z)$ are not all zero.
At least one of $a,b,c$ is nonzero.  If $a\neq0$, then $ag \equiv bf$ and $ah \equiv cf$ on $\D$ since the two sides
agree wherever the projective triple is defined and their respective differences are holomorphic. Thus,
$g \equiv \frac{b}{a}f$ and $h \equiv \frac{c}{a}f$ on $\D$. The cases $b\neq0$ and $c\neq0$ are similar. Consequently, $\dim\Span\{f,g,h\}\leq1$. This completes the proof. \qed

\section{Proof of Theorem~\ref{Theorem:Main}.b}\label{Section:ProofB}
The proof generalizes the method of \cite{GarciaFLT}.
We require a result of J\"arvi \cite[Cor.~1]{Jarvi}; see also previous related work of Nishino
\cite[Thm.~II, pp.~110--111]{Nishino} and Suzuki \cite{Suzuki}.

\begin{lemma}\label{Lemma:Extension}
Let $\Omega\subset\C$ be a domain, let $E\subset \Omega$ be compact with
$\caplog E=0$, and let $X$ be a compact Riemann surface of genus at least two.  Every holomorphic map
$\Phi:\Omega\setminus E\to  X$ extends holomorphically to $\Omega$.
\end{lemma}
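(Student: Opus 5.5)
Since this is \cite[Cor.~1]{Jarvi}, the paper will presumably just cite it. If I had to prove it directly, I would argue by uniformization of $X$ followed by removability of polar sets for bounded holomorphic functions.

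\textbf{Reduction to a local problem.} Because $X$ has genus at least two, its universal cover is $\D$, so $X=\D/G$ for a Fuchsian group $G$ acting freely, with covering map $\pi:\D\to X$. A compact set of capacity zero is totally disconnected and does not separate the plane, so $\Omega\setminus E$ is again a domain. Extension is a local matter. It therefore suffices to show the following: every $e\in E$ has a neighborhood $V$, bounded by a Jordan curve in $\Omega\setminus E$, on which $\Phi$ extends holomorphically across $E\cap V$. The local extensions then agree on overlaps with $\Phi$ itself, since $E$ has empty interior.

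\textbf{The lifting step, which I expect to be the main obstacle.} I want to show that $\Phi_*:\pi_1(V\setminus E)\to\pi_1(X)$ is trivial for suitable $V$. Once this holds, $\Phi|_{V\setminus E}$ lifts to a holomorphic $\widetilde\Phi:V\setminus E\to\D$ with $\Phi=\pi\circ\widetilde\Phi$. My plan for triviality uses the Schwarz--Pick contraction.
\begin{enumerate}
\item Give $\Omega\setminus E$ its hyperbolic metric (it is hyperbolic since $E\neq\varnothing$ and the domain is not $\C$ minus at most one point; if $E$ is a single point, argue instead with the punctured disk directly). Give $X$ its hyperbolic metric. Then $\Phi$ does not increase lengths.
\item Since $X$ is compact, its injectivity radius is some $\varepsilon>0$. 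Hence any loop of hyperbolic length less than $\varepsilon$ maps to a null-homotopic loop in $X$.
\item Use $\caplog E=0$ to produce, around any clump of $E$, separating Jordan curves in $\Omega\setminus E$ of arbitrarily small hyperbolic length. The intended mechanism is that capacity zero means the family of curves separating $E$ from $\partial\Omega$ has zero extremal length. Hence there are annuli of arbitrarily large modulus about $E$, and the core curves of such annuli have small hyperbolic length in $\Omega\setminus E$.
\item Choose $V$ so that $\pi_1(V\setminus E)$ is generated by such short loops. To do this, cover $E\cap V$ by finitely many small disjoint regions, each bounded by a short curve, and refine hierarchically.
\end{enumerate}
The delicate point is this generation claim. $\pi_1(V\setminus E)$ is infinitely generated, so one needs a Cantor-type nested covering in which every loop is homotopic to a product of short separating loops. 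If this combinatorics fails, my fallback is to lift along each complementary component of a nested exhaustion and pass to a limit, using normality of maps into $\D$.

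\textbf{Removability and conclusion.} The lift $\widetilde\Phi$ is a bounded holomorphic function on $V\setminus E$. Compact sets of capacity zero are removable for bounded holomorphic (indeed bounded harmonic) functions, so $\widetilde\Phi$ extends holomorphically to $V$ with values in $\overline{\D}$. If $\widetilde\Phi$ is constant, its value lies in $\D$, because $V\setminus E$ is nonempty and $\widetilde\Phi$ maps it into $\D$. If it is nonconstant, the open mapping theorem forces $\widetilde\Phi(V)\subset\D$. In either case $\pi\circ\widetilde\Phi$ is a holomorphic extension of $\Phi$ to $V$, which completes the local step and hence the proof.
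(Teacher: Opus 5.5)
The paper does not prove this lemma. It simply quotes J\"arvi \cite[Cor.~1]{Jarvi}, with Nishino and Suzuki as antecedents, so your sketch has to stand on its own. Your local reduction and your endgame are fine: compact polar sets are removable for bounded holomorphic functions, and the open mapping theorem keeps the extended lift in $\D$. There is, however, a genuine gap in the lifting step, which carries the entire content of the theorem.

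Step (c) is false as stated. Zero extremal length of the separating family is realized by systems of many curves, not by single annuli around a given point or clump of $E$. So it does not give short curves around every clump. Concretely, take $\Omega=\D$ and $E=\{0\}\cup\{2^{-k}:k\geq 1\}$, which is countable and hence polar. Then $\D\setminus E\subset D:=\C\setminus(\{0\}\cup\{2^j:j\in\mathbb{Z}\})$, so hyperbolic lengths in $\D\setminus E$ are at least those in $D$. Since $z\mapsto 2z$ is an automorphism of $D$, the domain $D$ covers the once-punctured torus $D/\langle z\mapsto 2z\rangle$. A closed curve in $D$ with nonzero winding number about $0$ projects to a homologically nontrivial curve on that torus. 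Its length is therefore at least the systole $c_0>0$ of the punctured torus, independently of scale. Hence every loop in $\D\setminus E$ of length less than $c_0$ has winding number $0$ about $0$. In particular, no annulus of large modulus in $\D\setminus E$ surrounds $0$, because the core curve of an annulus of modulus $m$ has hyperbolic length $O(1/m)$.

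Now fix any neighborhood $V$ of $0$. The map $\gamma\mapsto\operatorname{ind}_\gamma(0)$ is a surjective homomorphism $\pi_1(V\setminus E)\to\mathbb{Z}$. Loops shorter than $c_0$, and their conjugates, lie in its kernel, so they cannot generate $\pi_1(V\setminus E)$. Compact surfaces of genus two can have arbitrarily small injectivity radius. For such $X$, your argument gives no information about $\Phi_*$ on loops around $0$. Thus the generation claim in (d) is not a matter of delicate combinatorics; it is false at $0$ for every choice of $V$.

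In this example you could still finish by first extending near the points where your method works and only then at $0$. Your proof contains no such induction, though, and it is unclear that one would terminate for perfect polar sets. Your fallback does not close the gap either: lifting on any piece that is not simply connected already presupposes that $\Phi_*$ is trivial there. A genuinely different input is needed to show that $\Phi_*$ is trivial near $E$, or to avoid lifting altogether. That is exactly what the cited theorem supplies.
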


We also require the following standard hyperbolic rigidity theorem \cite[Thm.~2.6.3]{Jost}.

\begin{lemma}\label{Lemma:HyperbolicRigidity}
Let $X$ be a compact Riemann surface of genus at least two.  Every holomorphic map
$\Phi:\C\to  X$ is constant.
\end{lemma}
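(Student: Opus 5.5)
The plan is to reduce the statement to Liouville's theorem via the uniformization theorem, exactly as sketched in Remark~\ref{Remark:Weierstrass}. Since $X$ is a compact Riemann surface of genus $g(X)\geq 2$, its universal cover is not $\widehat{\C}$ (which covers only genus zero surfaces) and not $\C$ (whose compact quotients by fixed-point-free groups of automorphisms are tori, of genus one). The uniformization theorem \cite[Thm.~4.4.1 \& Cor.~4.4.1]{Jost} therefore provides a holomorphic universal covering map $\pi:\D\to X$, under which $X$ is biholomorphic to $\D/G$ for a Fuchsian group $G$ acting freely.

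Next I will lift $\Phi$ through $\pi$. Since $\C$ is simply connected and locally path connected, and $\pi$ is a covering map, the lifting criterion from covering space theory yields a continuous map $k:\C\to\D$ with $\Phi=\pi\circ k$, just as in the factorization \eqref{eq:PhiEllipticFactorization}. The lift $k$ is automatically holomorphic. Indeed, near any $z_0\in\C$, the map $\pi$ restricts to a biholomorphism from a neighborhood $U$ of $k(z_0)$ onto an open subset of $X$. Continuity of $k$ gives $k=(\pi|_U)^{-1}\circ\Phi$ near $z_0$, which is a composition of holomorphic maps.

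Finally, $k$ is an entire function with values in $\D$, so it is bounded and hence constant by Liouville's theorem. Consequently $\Phi=\pi\circ k$ is constant. The only substantive input is the identification of the universal cover as $\D$ when $g(X)\geq2$, which is the uniformization theorem together with the classification of its three possible outcomes. I therefore expect this to be the step that carries the weight, though it is entirely quoted; the lifting and holomorphy verifications are routine.
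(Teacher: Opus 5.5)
Your proof is correct and follows the paper's argument: the paper also uses uniformization to identify the universal cover of $X$ as $\D$, lifts $\Phi$ through the covering map to a holomorphic map $\C\to\D$, and concludes by Liouville's theorem. Your extra verifications (ruling out $\widehat{\C}$ and $\C$ as universal covers, and checking that the lift is holomorphic) fill in details the paper leaves implicit.
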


The proof of the lemma is short enough to state here. The universal covering surface of $X$ is $\D$.  Since $\C$ is simply connected, $\Phi$ lifts to a holomorphic map $\widetilde{\Phi}:\C\to \D$, so Liouville's theorem implies that $\widetilde{\Phi}$ is constant.  Thus, $\Phi$ is constant.

\begin{proof}[Proof of Theorem~\ref{Theorem:Main}.b]
Let $E=\sigma(\theta)\cap\T$, so $\caplog E=0$.  If $f \equiv g \equiv h \equiv 0$, there is nothing to prove.  
Otherwise, Lemma~\ref{Lemma:Removable}, together with
\eqref{eq:PseudocontinuationLocal} and \eqref{eq:ExteriorPolynomialRelation}, provides holomorphic maps
\begin{equation}
[f:g:h]:\D\to\Gamma_P \qquad\text{and}\qquad
[\widetilde{f}^{\#}:\widetilde{g}^{\#}:\widetilde{h}^{\#}]:\De\to\Gamma_P
\end{equation}
whose restrictions agree wherever these projective triples are defined.  Thus, the extended maps agree across every open arc in $\T\setminus E$
and hence they glue to a holomorphic map $\Psi:\widehat{\C}\setminus E\to\Gamma_P$.
Restrict $\Psi$ to $\C\setminus E$ and apply
Lemma~\ref{Lemma:Extension} with $\Omega=\C$ to obtain a holomorphic extension
$\widetilde{\Psi}:\C\to\Gamma_P$.
Since $\deg P = n\geq4$ and $\Gamma_P$ is smooth, \eqref{eq:Genus} gives $g(\Gamma_P) \geq3$.
Lemma~\ref{Lemma:HyperbolicRigidity} ensures that $\widetilde{\Psi}$ is constant.  Thus, $[f:g:h]$ is constant on $\D$,
so there is an $[a:b:c]\in\Gamma_P$ such that $[f:g:h]=[a:b:c]$ wherever the projective triple is defined.  As in the proof of
Theorem \ref{Theorem:Main}.a, we conclude that $\dim\Span\{f,g,h\}\leq1$.
\end{proof}

\bibliography{FLTSIS2}
\bibliographystyle{amsplain}

\end{document}